\newtheorem{theorem}{Theorem}[section]
\newtheorem{lemma}[theorem]{Lemma}
\newtheorem{fact}[theorem]{Fact}
\newcommand{\dt}{\mathcal{D}_n(T_2)}
\begin{document}


\title[]{Density and Fractal Property of the Class of Oriented Trees}


\author{Jan Hubi\v cka}
\address{Department of Applied Mathematics (KAM), Charles University,
Ma\-lo\-stransk\'e n\'am\v est\'i 25, Praha 1, Czech Republic}
\curraddr{}
\email{hubicka@kam.mff.cuni.cz}
\urladdr{}

\author{Jaroslav Ne\v set\v ril}
\address{Computer Science Institute of Charles University (IUUK),
Charles University, Ma\-lo\-stransk\'e n\'am\v est\'i 25, Praha 1, Czech
Republic}
\curraddr{}
\email{nesetril@iuuk.mff.cuni.cz}
\urladdr{}

\author{Pablo Oviedo}
\address{Departament de Matem\` atiques, Universitat Polit\` ecnica de Catalunya, Barcelona, Spain}
\curraddr{}
\email{pablo.oviedo@estudiant.upc.edu}
\urladdr{}



\thanks{The first author was supported by  project  18-13685Y  of  the  Czech  Science Foundation (GA\v CR) and by Charles University project Progres Q48.  The third author was supported by the Spanish Research Agency under project MTM2017-82166-P.}


\keywords{}


\subjclass{05C05, 05C38, 05C20, 06A06}{}

\begin{abstract}
We show a density theorem for the class of finite proper trees ordered by the homomorphism order, where a proper tree is an oriented tree which is not homomorphic to a path. We also show that every interval of proper trees, in addition to being  dense, is in fact universal. We end by considering the fractal property in the class of all finite digraphs.
This complements the characterization of finite dualities of finite digraphs.
\end{abstract}

\maketitle



\section{Introduction}
\label{section1}
In this note we consider finite directed graphs (or digraphs) and countable partial orders. A homomorphism between two digraphs $f:G_1\rightarrow G_2$ is an arc preserving mapping from $V(G_1)$ to $V(G_2)$. If such homomorphism exists we write $G_1\leq G_2$. The relation $\leq$, called the homomorphism order, defines a quasiorder on the class of all digraphs which, by considering equivalence classes, becomes a partial order. A core of a digraph is its minimal homomorphic equivalent subgraph. 

In the past three decades the richness of the homomorphism order of graphs and digraphs has been extensively studied \cite{llibre}. In 1982, Welzl showed that undirected graphs, with one exception, are dense \cite{origdens}. Later in 1996, Ne\v set\v ril and Zhu characterized the gaps and showed a density theorem for the class of finite oriented paths \cite{pathhomomorphism}. We contribute to this research by showing a density theorem for the class of oriented trees. We say that an oriented tree is \emph{proper} if its core is not a path. 

\begin{theorem}
\label{1}
Let $T_1$ and $T_2$ be two finite oriented trees satisfying $T_1<T_2$. If $T_2$ is a proper tree, then there exists a tree $T$ such that $T_1<T<T_2$.
\end{theorem}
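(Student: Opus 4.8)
The plan is to approximate $T_2$ tightly from below by an explicit family of oriented trees. From the proper tree $T_2$ one constructs a sequence of finite oriented trees $\dt$, $n\ge 1$, with three properties: (i) $\dt<T_2$ for every $n$; (ii) the sequence is strictly increasing in the homomorphism order, $\dt<\mathcal D_{n+1}(T_2)$; and (iii) it is \emph{cofinal below $T_2$}, i.e.\ every tree $S$ with $S<T_2$ satisfies $S\le\dt$ for some $n$. Once such a family is available, the theorem is immediate. Given $T_1<T_2$ with $T_2$ proper, property (iii) applied to $S=T_1$ yields an $n$ with $T_1\le\dt$. If $T_1<\dt$, put $T:=\dt$; if instead $T_1$ and $\dt$ are homomorphically equivalent, put $T:=\mathcal D_{n+1}(T_2)$, so that by (i) and (ii) we get $T_1\le\dt<\mathcal D_{n+1}(T_2)=T<T_2$. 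In both cases $T$ is a tree with $T_1<T<T_2$, and everything reduces to constructing the family $\dt$.

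Replacing $T_2$ by its core, we may assume $T_2$ is itself a core which is not an oriented path; in particular $T_2$ contains a branch vertex (a vertex of degree at least three). The tree $\dt$ is obtained from $T_2$ by a \emph{local dilution} depending on the parameter $n$: one keeps a branch vertex of $T_2$ together with its surroundings untouched and ``stretches'' a suitably chosen arc, or short subwalk, elsewhere in $T_2$, replacing it by a longer oriented path whose length grows with $n$, the inserted paths chosen compatibly so that the one for parameter $n$ maps into the one for parameter $n+1$ relative to their endpoints. With the right choice, both $\dt\le T_2$ and the homomorphism $\dt\to\mathcal D_{n+1}(T_2)$ witnessing half of (ii) hold essentially by construction; the strictness in (ii) is established together with (i).

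The content lies in properties (i) and (iii), which pull against each other: the dilution must be drastic enough to absorb every tree below $T_2$, yet never so drastic as to absorb $T_2$ itself. For (i) one must show $T_2\not\to\dt$ for all $n$, and this is exactly where the hypothesis that the \emph{core} of $T_2$ is not a path --- not merely that $T_2$ is not a path --- is used: a homomorphism $T_2\to\dt$ can be analysed, using the path-like shape of the stretched region, to show that $T_2$ would then be homomorphically equivalent to an oriented path, contradicting properness. For (iii) one takes a tree $S<T_2$, fixes a homomorphism $S\to T_2$, and argues that for $n$ large it can be adapted to a homomorphism $S\to\dt$: the dilution alters only ``long-range'' structure, whereas a fixed finite tree $S$ probes $T_2$ only at a bounded scale. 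I expect the main obstacle to be carrying out this adaptation for \emph{every} tree below $T_2$ while simultaneously keeping the argument for (i) intact --- that is, calibrating the dilution so that it becomes cofinal below $T_2$ without ever reaching $T_2$ --- and it is precisely the properness of $T_2$ that should make such a calibration possible. (The same circle of ideas, pushed further, plausibly also yields the universality of intervals asserted in the abstract.)
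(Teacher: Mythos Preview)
Your plan diverges from the paper's at the decisive step. You want an increasing chain $(\dt)_n$ that is \emph{cofinal} below $T_2$ (your property (iii)): every tree $S<T_2$ should map into some $\dt$. The paper never attempts this; it argues in the opposite direction, showing that for $n>|V(T_1)|$ there is \emph{no} homomorphism $\dt\to T_1$. The construction is also different from your ``stretch an arc'' dilution: the paper decomposes $T_2$ at a branch vertex $x$ into three planks $U,X,W$, builds a gadget $\mathcal D_1(T_2)$ from copies of these planks, and lets $\dt$ be the concatenation of $n$ such gadgets. This $\dt$ carries $6n$ designated ``labelled vertices'', and Lemma~\ref{labelled.vertices} shows that any homomorphism $\dt\to T_1$ (with $T_2\nrightarrow T_1$) must send them to pairwise distinct vertices of $T_1$ --- impossible once $n>|V(T_1)|$. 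Together with $\dt\to T_2$ (clear) and $T_2\nrightarrow\dt$ (via rigidity of tree cores, Fact~\ref{rigid}) this yields $T_1<T_1+\dt<T_2$; the disjoint union is then turned into a single tree by attaching $T_1$ to $\dt$ with a long zig-zag, as in the path case.

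So the paper replaces your hard step (iii) by a short pigeonhole argument, at the price of taking $T=T_1+\dt$ (plus a zig-zag) rather than $T=\dt$ itself. As for your proposal on its own terms, (iii) is a genuine gap. The heuristic that a fixed $S$ ``probes $T_2$ only at bounded scale'' while the dilution alters only ``long-range'' structure is not made precise, and I do not see how to convert it into a proof that \emph{every} tree $S<T_2$ maps to some member of your chain; cofinality of a single chain below $T_2$ is a strictly stronger statement than density requires. Your construction is also too loosely specified even to check $\dt\le T_2$: replacing an arc by a longer oriented path does not in general leave a homomorphism back to $T_2$ unless the inserted path is chosen with some care you have not spelled out. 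If you could genuinely establish (iii) you would obtain more than the paper proves, but as written the proposal does not deliver it, and the paper's route sidesteps the difficulty entirely.
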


This result was claimed by Miroslav Treml around 2005, but never published. Our proof of Theorem \ref{1} is new and simple, and leads to further consequences. In particular, we can show the following strengthening. Let us say that a partial order is universal if it contains every countable partial order as a suborder. 

\begin{theorem}
\label{2}
Let $T_1$ and $T_2$ be two finite oriented trees satisfying $T_1<T_2$. If $T_2$ is a proper tree, then the interval $[T_1,T_2]$ is universal. 
\end{theorem}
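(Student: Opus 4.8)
The plan is to produce, for an arbitrary countable partial order $P$, an order-embedding of $P$ into $[T_1,T_2]$. It suffices to embed a single fixed countable universal partial order, and by the theorem of Hubi\v cka and Ne\v set\v ril that finite oriented trees (indeed already finite oriented paths) form a universal homomorphism order, it even suffices to embed the homomorphism order of all finite oriented trees. So the goal becomes: construct an order-embedding $\Phi$ of the class of finite core oriented trees into the collection of trees lying strictly between $T_1$ and $T_2$, with $\Phi(S)$ always a proper tree. Composing $\Phi$ with a known embedding of $P$ into finite trees then proves Theorem \ref{2}, with the embedded copy sitting entirely among the proper trees of the interval.

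To build $\Phi$ I would reuse, in parametrized form, the interpolation construction behind the proof of Theorem \ref{1}. That construction takes the data $T_1<T_2$ with $T_2$ proper and produces a single tree $T_0$ with $T_1\to T_0\to T_2$, together with structural invariants certifying $T_0\not\to T_1$ and $T_2\not\to T_0$ — the latter being exactly where the hypothesis that $T_2$ is proper enters, since it forces $T_2$ to be ``non-path-like'' in a way that $T_0$ is not. Typically such a construction involves several combinatorial parameters: the lengths and mutual arrangement of certain directed segments attached to a rigid skeleton that contains a copy of $T_1$ and maps onto $T_2$. I would set $\Phi(S)$ to be the tree obtained by tuning these parameters so that they record the tree $S$ — in the style of the Hubi\v cka--Ne\v set\v ril encoding of arbitrary structures into trees, so that $S$ can be recovered up to homomorphism from the arrangement of segments. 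The inequalities $T_1<\Phi(S)<T_2$ should then hold for every $S$ by the same invariants, which are insensitive to the particular choice of parameters, properness of $T_2$ again supplying $T_2\not\to\Phi(S)$; and $\Phi(S)$ can be taken to be a core proper tree.

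What remains, and is the main obstacle, is faithfulness: $S\to S'$ if and only if $\Phi(S)\to\Phi(S')$. The forward implication is the routine one — a homomorphism $S\to S'$ is promoted to a homomorphism $\Phi(S)\to\Phi(S')$ by acting as the identity on the common skeleton and transporting the recorded segments accordingly. The reverse implication is the crux: one must show that an arbitrary homomorphism $f\colon\Phi(S)\to\Phi(S')$ necessarily preserves the skeleton, hence the way $S$ is recorded, so that decoding $f$ produces a homomorphism $S\to S'$. This should be forced by rigidity: if the skeleton is assembled from directed paths whose net lengths are pairwise distinct and grow rapidly, then — since a tree contains no closed walk — the $f$-image of each such path is pinned to its counterpart in $\Phi(S')$, after which connectivity pins down the rest. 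The delicate point is to arrange this rigidity analysis simultaneously with keeping every $\Phi(S)$ strictly inside the interval; the remaining verifications are bookkeeping. Once $\Phi$ is an order-embedding, Theorem \ref{2} follows, and one reads off along the way the fractal property of the class of proper trees announced in the abstract.
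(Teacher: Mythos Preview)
Your high-level strategy --- embed a known universal partial order into $[T_1,T_2]$ via a gadget construction and verify faithfulness by a rigidity argument --- matches the paper's. But the key step, the actual construction of $\Phi$, is left as ``tune the parameters of the Theorem~\ref{1} interpolator so as to record $S$''. This is the gap. The interpolating tree $\mathcal{D}_n(T_2)$ has essentially a single integer parameter $n$ and consists of $n$ identical copies of one block glued in a chain; there is no evident family of ``lengths and arrangements of segments'' in it to tune, and your proposed rigidity mechanism (pairwise distinct, rapidly growing net-lengths pinning paths to their counterparts) describes a device foreign to that construction. As written, ``in the style of the Hubi\v cka--Ne\v set\v ril encoding'' is a hope, not a construction.

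The paper fills this gap concretely and differently. It fixes $n$ large once and for all, takes the core $T$ of $\mathcal{D}_n(T_2)$ --- which, being the core of a tree, is rigid by Fact~\ref{rigid} --- decorates it with short zig-zags at its extreme labelled vertices to obtain a gadget $T''$ with two marked vertices $y',z'$ at the same level, and then defines $\Phi(P)$ for an oriented \emph{path} $P$ by arc replacement: substitute each arc of $P$ by a copy of $T''$, identifying the endpoints with $y',z'$. The source order is paths rather than trees --- still universal by \cite{univepaths}, and paths make arc replacement clean since no branch vertex forces three or more gadget copies to be reconciled. Faithfulness then drops out of the rigidity of $T$: any homomorphism $\Phi(P_1)\to\Phi(P_2)$ must carry each copy of $T$ identically onto some copy of $T$, so adjacent gadgets go to adjacent gadgets and the map descends to $P_1\to P_2$. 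To turn your outline into a proof you would need to supply an equally explicit $\Phi$ and an equally explicit rigidity argument; the paper's indicator-style construction is the missing idea.
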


Recently, it has been shown that every interval in the homomorphism order of finite undirected graphs is either universal or a gap \cite{fractal}. As consequence of Theorem \ref{2}, this property, called fractal property, seems to be present in other classes of digraphs. In fact, we have shown the following result related to the class of finite digraphs.

\begin{theorem}
\label{3}
Let $G$ and $H$ be two finite digraphs satisfying $G<H$, where the core of $H$ is connected and contains a cycle. Then the interval $[G,H]$ is universal.
\end{theorem}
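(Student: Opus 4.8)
The strategy is to reduce Theorem 3 to Theorem 2 by finding a suitable proper tree inside the interval $[G,H]$ and then exploiting the known universality of tree intervals. First I would replace $H$ by its core, so we may assume $H$ itself is connected and contains a (directed) cycle $C$. Since $G<H$, there is no homomorphism $H\to G$. The key observation is that a connected digraph containing a directed cycle of some length $\ell$ admits homomorphisms from arbitrarily long oriented paths and, more to the point, its homomorphism-order behaviour near $H$ resembles that of a proper tree: a long enough oriented tree built around a path that ``wraps'' the cycle $C$ will map to $H$ but not be homomorphic to any oriented path. So the plan is to construct a proper tree $T_2$ with $T_2\le H$ and $G\not\le T_2$ is \emph{false}—rather, we want $G<T_2\le H$, i.e.\ $T_2$ strictly above $G$ but at or below $H$—and then apply Theorem 2 to the interval $[G,T_2]\subseteq[G,H]$, which is universal, hence so is $[G,H]$.

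To produce such a $T_2$: I would take the cycle $C$ in $H$ and consider a ``blown-up'' oriented tree obtained by taking a very long zig-zag path whose net algebraic length is a large multiple of the cycle length, decorated with pendant structures rich enough to ensure the core is not a path (for instance, attaching at some vertex two incomparable in- and out-branches so that the algebraic-length invariant of Nešetřil–Zhu obstructs any homomorphism to a path). The decorations must be chosen compatible with $H$, meaning they still map into $H$ using the connectivity of $H$ around $C$. Simultaneously one needs $G\not\ge T_2$; here one uses that $G<H$ together with the flexibility of making $T_2$ ``long and complicated enough'' relative to the finite object $G$—a standard indicator/Sparse Incomparability–type argument guarantees that among the trees mapping into $H$ we can find one not mapping into $G$, because if every such tree mapped into $G$ then, combining with density/universality of trees below $H$, one could derive $H\le G$.

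Once $T_2$ is in place with $G<T_2\le H$, Theorem 2 gives that $[G,T_2]$ is universal provided $T_1:=$ something strictly below $T_2$—we take $T_1$ to be any tree with $G\le T_1<T_2$, e.g.\ a long oriented path below $T_2$, or if $G$ itself is not a tree we first insert a tree $T_1$ with $G\le T_1<T_2$ using that $T_2$ is proper and applying Theorem 1 finitely often. Then $[T_1,T_2]\subseteq[G,H]$ is universal by Theorem 2, and since a partial order containing a universal suborder is itself universal, $[G,H]$ is universal.

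The main obstacle I anticipate is the simultaneous control of both endpoints: building $T_2$ that maps into $H$ (using only the cycle and local connectivity of $H$) while \emph{provably} not mapping into $G$. The upper constraint $T_2\le H$ is easy once $T_2$ is shaped around the cycle $C$; the lower constraint $G<T_2$ is the delicate part and is where a Sparse Incomparability Lemma or an explicit indicator construction must be invoked, and one must check the decorations that force ``properness'' do not accidentally create a homomorphism to $G$. Verifying that the pendant decorations keep $T_2$ proper—i.e.\ its core is genuinely not a path—is routine via the algebraic length / height invariants but needs to be stated carefully.
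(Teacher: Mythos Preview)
Your reduction to Theorem~\ref{2} has a fatal gap: you need a tree $T_1$ with $G\le T_1$, but no such tree exists whenever $G$ contains an unbalanced oriented cycle (for instance any directed cycle $\vec C_k$). In an oriented tree one has a well-defined level function $\ell:V(T)\to\mathbb{Z}$ with $\ell(v)=\ell(u)+1$ for every arc $uv$; a homomorphism into $T$ therefore forces every closed walk in the source to have algebraic length zero. Nothing in the hypotheses of Theorem~\ref{3} prevents $G$ from containing such a cycle---take $G=\vec C_6$ and $H=\vec C_3$, say, where $G<H$ and the core of $H$ is connected and cyclic---and in that case there is no tree whatsoever in the up-set of $G$, so no tree interval $[T_1,T_2]$ can sit inside $[G,H]$. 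The obstruction already bites at your first step, where you ask for a proper tree $T_2$ with $G<T_2$ (this requires $G\to T_2$); the later insertion of a tree $T_1$ with $G\le T_1<T_2$ is equally blocked. Your Sparse Incomparability discussion addresses only the condition $T_2\not\to G$, which is the wrong direction.

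The paper does not actually prove Theorem~\ref{3} in this note (the proof is deferred to a full version), but the hint it gives---that the argument ``combines some techniques already used \cite{llibre,fractal} with some extra arguments''---points to a direct indicator/gadget construction in the style of the fractal-property proof for undirected graphs, not a reduction to the tree case. Such a construction uses the cycle in the core of $H$ to manufacture, for each oriented path $P$, a digraph $\Phi(P)\in[G,H]$ so that $P\mapsto\Phi(P)$ is an order-embedding of the universal poset of paths; it imposes no tree-like restriction on $G$. To salvage your plan you would have to treat separately the (generic) case where $G$ is not balanced, and at that point you are essentially forced into the indicator construction anyway.
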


The proof of Theorem \ref{3} will appear in the full version of this note.

\section{Preliminaries}
\label{section2}
We follow the notation used in Hell and Ne\v set\v ril's book \cite{llibre}.

A \emph{digraph} $G$ is an ordered pair of sets $(V,A)$ where $V=V(G)$ is a set of elements called \emph{vertices} and $A=A(G)$ is a binary irreflexive relation on $V$. The elements $(u,v)$, denoted $uv$, of $A(G)$ are called \emph{arcs}. 

A \emph{path} is a digraph consisting in a sequence of different vertices $\{v_0,\dots,v_k\}$ together with a sequence of different arcs $\{e_1,\dots,e_k\}$ such that $e_i$ is an arc joining $v_{i-1}$ and $v_i$ for each $i=1,\dots,k$. A \emph{cycle} its defined analogously but with $v_0=v_k$. A \emph{tree} is a connected digraph containing no cycles. The \emph{height} of a tree is the maximum difference between forward and backward arcs of a subpath in it.

A \emph{homomorphism} from a digraph $G$ to a digraph $H$ is a mapping $f:V(G)\rightarrow V(H)$ such that $uv\in E(G)$ implies $f(u)f(v)\in E(H)$. It is denoted $f:G\rightarrow H$. If there exists a homomorphism from $G$ to $H$ we write $G\rightarrow H$, or equivalently, $G\leq H$. We shall write $G<H$ for $G\leq H$ and $H\nleq G$. The \emph{interval} $[G,H]$ consists in all digraphs $X$ such that $G\leq X\leq H$. A \emph{gap} is an interval in which there is no digraph $X$ such that $G<X<H$.

The relation $\leq$ is clearly a quasiorder which becomes a partial order by choosing a representative for each equivalence class, in our case the so called core. A \emph{core} of a digraph is its minimal homomorphic equivalent subgraph.

Given two partial orders $(\mathcal{P}_1,\leq_1)$ and $(\mathcal{P}_2,\leq_2)$, an \emph{embedding} from $(\mathcal{P}_1,\leq_1)$ to $(\mathcal{P}_2,\leq_2)$ is a mapping $\Phi:\mathcal{P}_1\rightarrow \mathcal{P}_2$ such that for every $a,b\in \mathcal{P}_1$, $a\leq b$ if and only if $\Phi(a)\leq \Phi(b)$. If such a mapping exists we say that $(\mathcal{P}_1,\leq_1)$ can be embedded into $(\mathcal{P}_2,\leq_2)$.

Finally, a partial order is \emph{universal} if every countable partial order can be embedded into it.

\section{Density Theorem}

In order to prove Theorem \ref{1} we shall construct a tree $\dt$ from a given proper tree $T_2$ which will satisfy $T_1<\dt<T_2$ for every tree $T_1<T_2$.

Given a tree $T$, a vertex $u\in V(T)$ and a set of vertices $S\subseteq V(T)$, the \emph{plank} from $u$ to $S$, denoted $P(u,S)$, is the subgraph induced by the vertices of every path which starts with $u$ and contains some vertex $v\in S$.

Let $T_2$ be the core of a proper tree. Then there exists a vertex $x\in V(T_2)$ such that $x$ is adjacent to at least three different vertices, name them $u,v,w$. Without loss of generality we shall assume that $ux$ and $wx$ are arcs. Let $X'\subseteq V(T_2)$ be the set of vertices, different from $u$ and $w$, which are adjacent to $x$. Note that $X'$ is not empty since $v\in X'$. Let $X=P(x,X')$, $U=P(x,\{u\})\backslash\{x\}$ and $W=P(x,\{w\})\backslash\{x\}$. Observe that $U\sqcup X\sqcup W\sqcup \{ux,wx\}=T_2$. See Figure~\ref{figure.T2}.

\begin{figure}
\centering
\begin{minipage}{0.4\textwidth}
\centering
\includegraphics[scale=0.5]{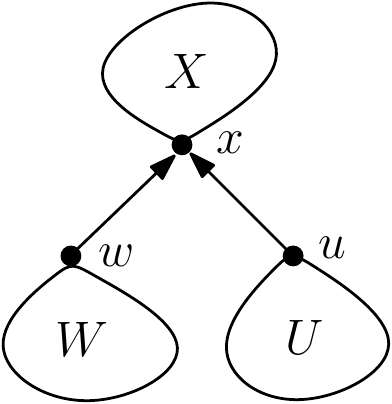}
\caption{Tree $T_2$.}
\label{figure.T2}
\end{minipage}%
\begin{minipage}{0.6\textwidth}
\centering
\includegraphics[scale=0.5]{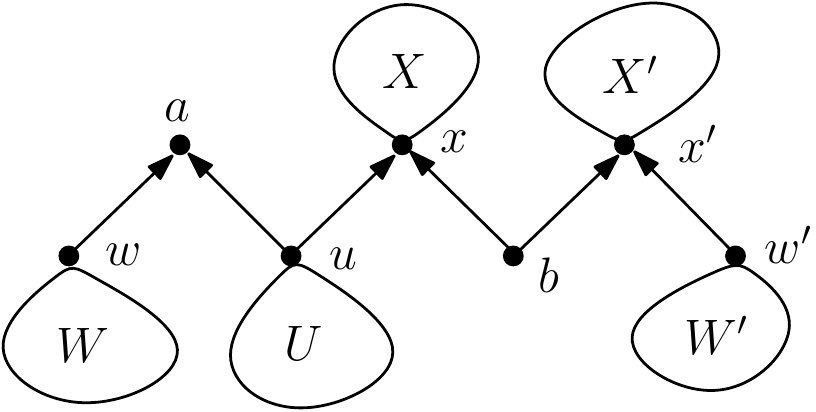}
\caption{Tree $\mathcal{D}_1(T)$.}
\label{figure.D1}
\end{minipage}
\end{figure}

\begin{figure}
\centering
\includegraphics[width=\textwidth]{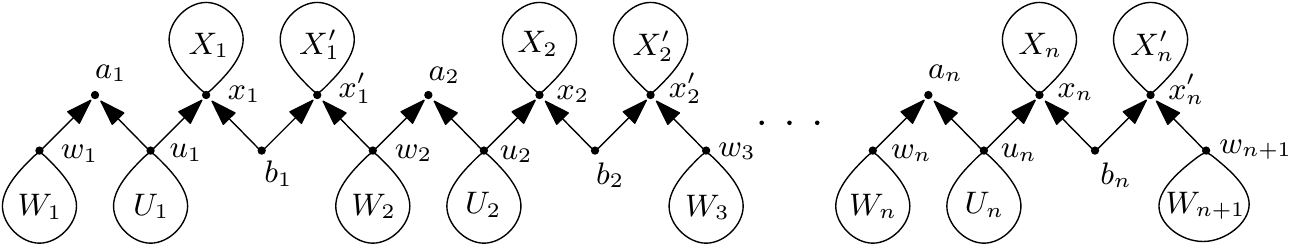}
\caption{Tree $\dt$. Observe the enumeration of the vertices and planks of each tree $\mathcal{D}_1(T)$.}
\label{figure.Dn}
\end{figure}

Now, let $\mathcal{D}_1(T_2)$ be the tree from Figure~\ref{figure.D1}, where $W$ and $W'$ are copies of the plank $W\subset T_2$, $U$ is a copy of $U\subset T_2$, and $X$ and $X'$ are copies of $X\subset T_2$.

Finally, let $\mathcal{D}_n(T_2)$ be a tree consisting in $n$ consecutive trees $\mathcal{D}_1(T_2)$ whose planks $W'$ are identified with the planks $W$ of the following trees. See Figure~\ref{figure.Dn}. We shall refer to the vertices $w_i,a_i,u_i,x_i,b_i,x'_i\in \dt$ for $i=1,\dots,n$ as \emph{labelled vertices}.

\begin{lemma}
\label{labelled.vertices}
Let $T_1$ and $T_2$ be finite oriented trees such that $T_2$ is a proper tree and $T_2\nrightarrow T_1$. If there exists a homomorphism $f:\dt\rightarrow T_1$, then every labelled vertex of $\dt$ is mapped to a different vertex of $T_1$.
\end{lemma}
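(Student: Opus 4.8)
\textbf{Proof plan for Lemma \ref{labelled.vertices}.}

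The plan is to argue by contradiction: suppose $f\colon \dt \to T_1$ is a homomorphism that identifies two labelled vertices, and from this build a homomorphism $T_2 \to T_1$, contradicting the hypothesis $T_2 \nrightarrow T_1$. The structure of $\dt$ is designed precisely so that collapsing any two labelled vertices (or, more precisely, mapping them to the same vertex of $T_1$) lets one ``fold'' the long chain of $\mathcal{D}_1(T_2)$-blocks back onto a single copy of $T_2$. So the first step is to set up notation carefully for the labelled vertices $w_i, a_i, u_i, x_i, b_i, x'_i$ and to record the key structural facts visible in Figure~\ref{figure.Dn}: within each block the plank from $x_i$ along $U$-direction to $u_i$, the two planks $X$ (from $x_i$ to $b_i$) and $X'$ (from $x'_i$ to $b_i$), and the identification $w_{i+1}=w_i'$ via the shared $W$-plank; and that consecutive blocks are linked so that $\dt$ looks like a path of $x_i$'s interleaved with the $U$- and $X$-structures, with the $w$-chain running alongside.

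Next I would analyze which pair of labelled vertices can coincide under $f$. Because homomorphisms preserve the ``net length'' (forward minus backward arcs) of walks only up to the structure of $T_1$, and because the labelled vertices within one block and between consecutive blocks are joined by specific short paths in $\dt$ whose shapes are copies of subpaths of $T_2$, one shows: if any two labelled vertices have the same $f$-image, then one can locate two labelled vertices $p$ and $q$ in \emph{adjacent} positions of the enumeration with $f(p)=f(q)$ — essentially by a pigeonhole/minimality argument on the distance between the colliding pair, using that the path in $\dt$ joining them is itself assembled from blocks. The point of reducing to an adjacent collision is that the segment of $\dt$ between $p$ and $q$ is then exactly (a copy of) $T_2$ minus the two arcs $ux,wx$ together with the appropriate re-gluing, i.e.\ the planks $U$, $X$, $X'$, $W$, $W'$ reassemble into $T_2$ once $p$ and $q$ are identified.

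The decisive step is then to convert the collision into a homomorphism $T_2 \to T_1$. Recall $U \sqcup X \sqcup W \sqcup \{ux, wx\} = T_2$. Restricting $f$ to the relevant block of $\dt$ already gives arc-preserving maps on (copies of) $U$, $X$, $W$ into $T_1$; the hypothesis $f(p)=f(q)$ for the adjacent colliding labelled vertices is exactly what is needed to glue these restricted maps consistently across the vertex $x$ of $T_2$ (the two arcs $ux$ and $wx$ are respected because the corresponding arcs $u_ix_i$ and $w_ix_i$, or $w_{i+1}x_i$, are arcs of $\dt$ mapped by $f$). Assembling these pieces yields a homomorphism $T_2 \to T_1$, the desired contradiction. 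I expect the main obstacle to be the bookkeeping in the second step: verifying that an arbitrary collision among labelled vertices forces a collision of a pair that is ``aligned'' with the $T_2$-template, rather than, say, two $x_i$'s at distance far apart or an $a_i$ with a $b_j$ — this requires using that $T_2$ is a \emph{core} (so it admits no nontrivial self-folding) and that $T_2 \nrightarrow T_1$ to exclude the degenerate gluings, and carefully matching the indices so the reassembled planks really form $T_2$ and not some proper quotient of it.
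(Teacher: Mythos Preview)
Your overall strategy---assume a collision among labelled vertices and manufacture a homomorphism $T_2\to T_1$---is correct and matches the paper's intent, but you are missing the observation that makes the argument short: the labelled vertices of $\dt$ lie along a single path (the ``spine''), and $T_1$ is a \emph{tree}. The paper exploits this with a three-line case split on the spine-distance $d$ between two colliding labelled vertices: if $d=1$ the two vertices are joined by an arc, so their common image would carry a loop; if $d\geq 3$ the spine path between them maps to a closed walk of length $\geq 3$ in $T_1$ whose intermediate images are (by minimality of the chosen pair) distinct, forcing a genuine cycle in $T_1$; hence only $d=2$ remains, and for every pair of labelled vertices at spine-distance $2$ the portion of $\dt$ between them is, by construction, exactly a copy of $T_2$ once the two endpoints are identified, yielding $T_2\to T_1$.

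Your plan replaces this with a ``pigeonhole/minimality'' reduction to an \emph{adjacent} collision, and that is where the gap is. First, ``adjacent in the enumeration'' means spine-distance $1$, and such a collision does \emph{not} let you reassemble $T_2$---it just produces a loop. The collisions that encode a copy of $T_2$ are the distance-$2$ ones (e.g.\ $f(w_i)=f(u_i)$, $f(a_i)=f(x_i)$, $f(x_i)=f(b_i)$, $f(x'_i)=f(a_{i+1})$, etc.), so your third paragraph is really describing the $d=2$ case, not the $d=1$ case. Second, your reduction step never names the property of $T_1$ that makes it work: without using that $T_1$ is acyclic, there is no reason a far-apart collision should force a nearby one. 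Once you insert ``$T_1$ is a tree, so the image of the spine path is a closed walk in a tree and must backtrack,'' the minimal colliding pair is forced to have $d\leq 2$, and then the $d=1$ and $d=2$ cases are as above. In short: keep your contradiction framework, but replace the vague reduction by the explicit distance trichotomy, and note that neither the rigidity of $T_2$ nor $T_2\nrightarrow T_1$ is needed for the $d=1$ and $d\geq 3$ cases---only acyclicity of $T_1$.
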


\begin{proof}
Assume that $T_2$ is a core and consider a homomorphism $f:\dt\rightarrow T_1$. Observe that two consecutive labelled vertices can not be mapped via $f$ to the same vertex since it would imply that $T_1$ contains a loop. Now, observe that if any pair of labelled vertices of distance two are mapped to the same vertex, it will induce a homomorphism $T_2\rightarrow T_1$. This follows from the construction of $\dt$. See Figure~\ref{figure.Dn}. Finally, if two labelled vertices at distance greater or equal to three are mapped to the same vertex, it would imply that $T_1$ contains a cycle, which is a contradiction since $T_1$ is a tree. We conclude that every labelled vertex has to be mapped to a different vertex of $T_1$.
\end{proof}

A digraph $G$ is \emph{rigid} if it is a core and the only automorphism $f:G\rightarrow G$ is the identity. We shall use the following fact.

\begin{fact}
\label{rigid}
The core of a tree is rigid.
\end{fact}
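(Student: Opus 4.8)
The plan is to establish the sharper statement that a finite oriented tree $T$ which equals its own core admits no automorphism other than the identity; the Fact then follows, because the core of a connected digraph is again connected (it is a retract, hence a homomorphic image of a connected graph) and a subgraph of a tree contains no cycle, so the core of a tree is a tree, and it is a core by definition. Thus I fix such a $T$, take an arbitrary automorphism $f\colon T\to T$, and aim to show $f=\mathrm{id}$.

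First I would produce a vertex fixed by $f$. Regarding $T$ as an undirected tree, $f$ preserves its centre, which by the classical theorem of Jordan is a single vertex or a single edge. A central vertex is fixed outright. If the centre is an edge $\{a,b\}$ and $f$ interchanged $a$ and $b$, then the arc of $T$ between them, say $a\to b$, would be carried by $f$ to $b\to a$, so $T$ would contain a directed cycle of length $2$ on $\{a,b\}$ --- impossible in a tree; hence $f$ fixes $a$. Either way $f$ fixes a vertex $r$, and I root $T$ at $r$. Since $f$ is a graph automorphism fixing $r$, it preserves the depth $d(\cdot,r)$.

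Now assume $f\neq\mathrm{id}$ and pick a vertex $v$ of least depth with $f(v)\neq v$; its depth is at least $1$, so its parent $u$ (the neighbour of $v$ closer to $r$) satisfies $f(u)=u$ by minimality. Then $f(v)$ has the same depth as $v$ and is adjacent to $u$, so it is a child of $u$ distinct from $v$, and the edge between $u$ and $f(v)$ carries the same orientation as the edge between $u$ and $v$ (because $f$ maps one onto the other). Let $T_v$ consist of $v$ together with its descendants, and define $g\colon T\to T$ by $g=f$ on $T_v$ and $g=\mathrm{id}$ off $T_v$. The unique edge of $T$ joining $T_v$ to the rest is the one between $u$ and $v$, and $g$ sends it to the edge between $u$ and $f(v)$, a genuine arc of $T$ with the matching direction; every other edge stays inside $T_v$ or inside $T\setminus T_v$, so $g$ is a homomorphism. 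Its image omits the nonempty set $T_v$ --- the branch $f(T_v)=T_{f(v)}$ hangs off $u$ through $f(v)\neq v$, hence is disjoint from $T_v$ --- so $g$ retracts $T$ onto the proper subgraph $T\setminus T_v$, contradicting that $T$ is a core. Therefore $f=\mathrm{id}$, and $T$ is rigid.

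I expect the two preliminary reductions (connectedness of the core, and preservation of depth) to be routine. The real content is the folding map $g$, and the point requiring care is precisely that the single ``crossing'' edge between $u$ and $v$ is sent to an actual arc of $T$ with the correct orientation; this is where the orientedness of $T$ enters (ruling out a $2$-cycle at the centre step) and where the minimal choice of $v$ is essential, since it is what guarantees that $u$ is fixed so that the edge between $u$ and $f(v)$ exists and is correctly directed.
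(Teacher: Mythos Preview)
Your argument is correct. Note, however, that the paper does not actually prove this statement: it is recorded as a \emph{Fact} without any justification, presumably as a folklore result. Your proof---locating a fixed root via Jordan's centre theorem (using the orientation to rule out a swap of a two-vertex centre), then choosing a shallowest moved vertex $v$ and folding the branch $T_v$ onto $T_{f(v)}$ to obtain a proper retraction---is a clean and standard way to establish it, and nothing in it requires correction.
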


\begin{proof}[Proof of Theorem \ref{1} (sketch)]
Assume that $T_2$ is a core. Let $n>|V(T_1)|$ and consider the tree $\dt$. It is clear that $\dt\rightarrow T_2$. It can also be checked that $T_2\nrightarrow \dt$ (here we might use Fact \ref{rigid}). To see that $\dt\nrightarrow T_1$ observe that by Lemma \ref{labelled.vertices} every labelled vertex in $\dt$ has to be mapped to a different vertex in $T_1$, but the number of labelled vertices in $\dt$ is greater than $|V(T_1)|$. Thus, $T_1<T_1+\dt<T_2$.

We end by joining $T_1$ with $\dt$ by a proper and long enough zig-zag. The method is similar to the one used in the proof of the density theorem for paths \cite{pathhomomorphism}.
\end{proof}

\section{Fractal property for proper trees}
\label{section3}
\begin{proof}[Proof of Theorem \ref{2} (sketch)]
Let $n>|V(T_1)|+2|V(T_2)|$ and consider the tree $\dt$. We know by Theorem \ref{1} that $T_1<T_1+\dt<T_2$.

Let $T$ be the core of $\dt$. By Lemma \ref{labelled.vertices} every labelled vertex in $\dt$ has to be mapped to a different vertex in $T$. Since $n>|V(T_1)|+2|V(T_2)|$, it follows that $T$ has at least $|V(T_1)|$ labelled vertex. Let $y$ and $z$ be the initial and ending labelled vertex of $T$ respectively. Let $T'$ be the tree obtained from $T$ by adding two new vertices $y'$ and $z'$ and joining $y'$ to $y$ and $z'$ to $z$ by a proper zig-zag of length 5 or 6 so $y'$ and $z'$ have the same level, as shown in Figure~\ref{figure.gadget}. Finally let $T''$ be the tree obtained by joining $T_1$ with $T'$ by a proper and long enough zig-zag.

\begin{figure}
\centering
\includegraphics[width=\textwidth]{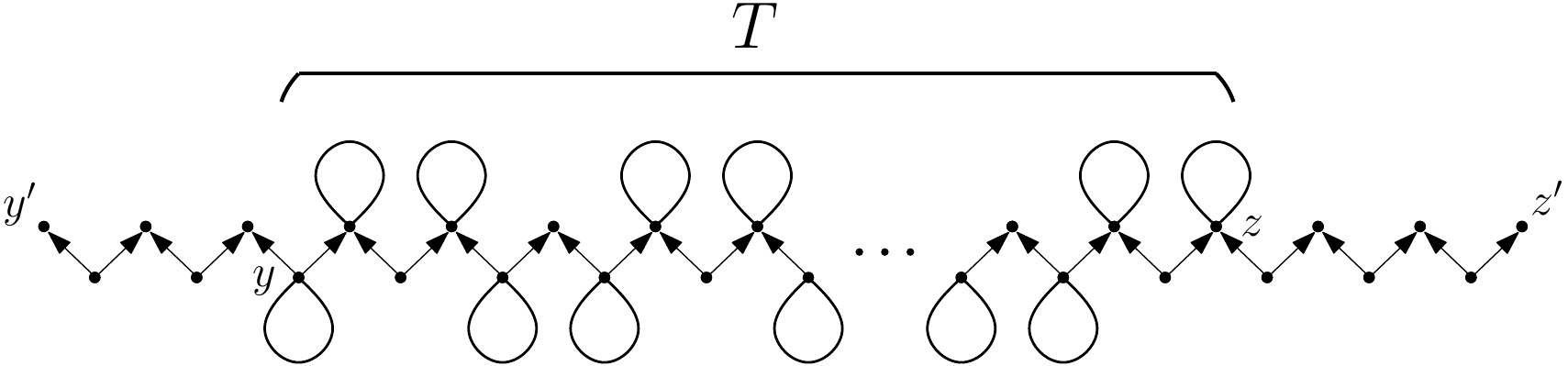}
\caption{This is an example of how $T'$ might look. The vertices $y$ and $z$ might be different from the ones in the figure but they must be labelled vertices of $\dt$.}
\label{figure.gadget}
\end{figure}

Now, we shall construct an embedding $\Phi$ from the homomorphism order of the class of oriented paths, which we know is a universal partial order \cite{univepaths}, into the interval $[T_1,T_2]$.

Given an oriented path $P$, let $\Phi(P)$ be the tree obtained by replacing each arc $v_1v_2$ in $P$ by a copy of $T''$ identifying $v_1$ with $y'$ and $v_2$ with $z'$. Observe that $T_1<\Phi(P)<T_2$. It is clear that any homomorphism $f:P_1\rightarrow P_2$ induces a homomorphism $g:\Phi(P_1)\rightarrow \Phi(P_2)$ by identifying arcs with copies of $T''$. To see the opposite, observe that since $T$ is rigid by Fact \ref{rigid}, every copy of $T$ in $\Phi(P_1)$ must be map via the identity to some copy of $T$ in $\Phi(P_2)$. It follows that adjacent copies of $T''$ in $\Phi(P_1)$ must be mapped to adjacent copies of $T''$ in $\Phi(P_2)$. Hence, each homomorphism $g:\Phi(P_1)\rightarrow \Phi(P_2)$ induces a homomorphism $f:P_1\rightarrow P_2$.
\end{proof}

\section{Fractal property for finite digraphs}
\label{section4}
We say that a class of digraphs $\Vec{\mathcal{G}}$ has the \emph{fractal property} if every interval in the homomorphism order $(\Vec{\mathcal{G}},\leq)$ is either universal or a gap. The fractal property was introduced by Ne\v set\v ril \cite{nesetril} and it has been shown recently that the class of finite undirected graphs (or symmetric digraphs) has the fractal property \cite{fractal}. In this note, we have shown that the class of proper trees has also the fractal property (as consequence of Theorem \ref{2}). However, the class of finite digraphs, and even the class of oriented trees, is more complicated.

Ne\v set\v ril and Tardif characterised all gaps in the homomorphism order of finite digraphs \cite{dual}. It was shown that for every tree $T$ there exists a digraph $G$ such that $[G,T]$ is a gap, and that all gaps have this form. Theorem \ref{1} contributes to this result by implying that if $[G,T]$ is a gap and $T$ is a proper tree, then $G$ must contain a cycle.

The characterisation of universal intervals in the homomorphism order of finite digraphs seems to be complicated. Related to this issue, we have stated Theorem \ref{3}. Its proof combines some techniques already used \cite{llibre,fractal} with some extra arguments, and will appear in the full version of this note. This result together with Theorem \ref{2} imply that the class of finite digraphs whose cores are not paths has the fractal property. However, intervals of the form $[G,P]$ where the core of $P$ is a path remain to be studied and characterised. 



\end{document}